\theoremstyle{plain}% Theorem-like structures
\newtheorem{theorem}{Theorem}[section]
\newtheorem{corollary}[theorem]{Corollary}
\newtheorem{lemma}[theorem]{Lemma}
\newtheorem{proposition}[theorem]{Proposition}
\theoremstyle{definition}
\newtheorem{example}{Example}
\theoremstyle{remark}
\newtheorem{remark}{Remark}
\newtheorem{observation}[theorem]{Observation} 
\newtheorem{question}[theorem]{Question}
\newtheorem{conj}[theorem]{Conjecture} 
\newcommand{\R}{\mathbf{R}}  % The real numbers.
\newcommand{\Z}{\mathbf{Z}}  % The integers.
\begin{document}

% title

  \title{The critical exponent for generalized doubly nonnegative matrices}

% authors

\author{
\name{Xuchen Han\textsuperscript{a}$^{\ast}$$^{\dagger}$\thanks{$^\ast$Corresponding author. Email: xhan@math.ucla.edu}\thanks{$^\dagger$This author was in part supported by SIGP}, Charles R. Johnson\textsuperscript{b} and Pietro Paparella\textsuperscript{c}}
\affil{\textsuperscript{a}Department of Mathematics, University of California Los Angeles, Los Angeles, CA 90095, USA
\textsuperscript{b}Department of Mathematics, College of William and Mary, Williamsburg, VA 23187-8795, USA
\textsuperscript{c}Division of Engineering and Mathematics, University of Washington Bothell, Bothell, WA 98011-8246, USA}
}

\maketitle

% abstract

\begin{abstract}
		It is known that the critical exponent (CE) for conventional, continuous powers of $n$-by-$n$ doubly nonnegative (DN) matrices is $n-2$. Here, we consider the larger class of diagonalizable, entrywise nonnegative $n$-by-$n$ matrices with nonnegative eigenvalues (GDN). We show that, again, a CE exists and are able to bound it with a low-coefficient quadratic. However, the CE is larger than in the DN case; in particular, 2 for $n=3$. There seems to be a connection with the index of primitivity, and a number of other observations are made and questions raised. It is shown that there is no CE for continuous Hadamard powers of GDN matrices, despite it also being $n-2$ for DN matrices.
\end{abstract}

% AMS keywords (used in AMS journals)
 \begin{keywords} Critical exponent, nonnegative matrix, index of primitivity, continuous conventional power, generalized doubly nonnegative matrix
\end{keywords}

\begin{classcode}
Primary 15B48
\end{classcode}

       % AMS subject classifications (used in AMS journals)
  %     \subjclass[2010]{Primary 15B48}

       % acknowledge support, etc
  
   %   \thanks{This research was partially supported by NSF grant
    %    \#DMS-0751964.}
       % \thanks{We would like to thank our colleagues for their helpful
       %  criticism.}

       % dedication
       %\dedicatory{}

       % today's date, or fill in whatever date you prefer
       \date{\today}

    % This ends the top matter information.
    % We can now tell LaTeX to display the top matter.

%\linenumbers

\pagenumbering{arabic}

\section{Introduction}

An $n$-by-$n$ real symmetric matrix is called \emph{doubly nonnegative} (DN) if it is both positive semidefinite and entrywise nonnegative.
Given a doubly nonnegative matrix $A$, the continuous conventional powers of $A$ are defined using the spectral decomposition: 
if $\alpha>0$, $A = UDU^{T}$, and $D = \text{diag}(d_{11}, . . . , d_{nn})$, then $A^\alpha := UD^\alpha U^T$, 
with $D^\alpha := \text{diag}(d_{11}^\alpha, . . . , d_{nn}^\alpha)$.
The conventional \emph{critical exponent} (CE) for DN matrices is the least real number $m$ such that for any DN matrix $A$, $A^\alpha$ is also DN for all $\alpha > m$. It was shown in \cite{Johnson} that the critical exponent for DN matrices exists and is no smaller than $n-2$, and is $n-2$ for $n<6$. A low coefficient quadratic upper bound was also given in \cite{Johnson}. The authors conjectured that the critical exponent is $n-2$, and this conjecture was proven in \cite{Stanford} by applying a result from \cite{M}. There is also the concept of critical exponents of DN matrices under Hadamard powering, and interestingly enough, the critical exponent is also shown to be $n-2$ in \cite{Hadamard}. 

Here we relax the assumption that the matrix be symmetric while still insisting that the matrix is entrywise nonnegative, diagonalizable, and has nonnegative eigenvalues. We call such matrices \emph{generalized doubly nonnegative} (GDN). Because GDN matrices are diagonalizable, we have 
$A = SDS^{-1}$, where $D$ is a diagonal matrix, and we can define continuous powers similarly via $A^\alpha := SD^\alpha S^{-1}$.

We show here the critical exponent for GDN matrices also exists, and we give low-coefficient quadratic upper bounds for it. We show that the critical exponent is strictly larger than $n-2$ if $n$ is an odd integer greater than 2. We make observations about the relation between the index of primitivity of a primitive matrix and the critical exponent for GDN matrices of that size. In addition, we make the observation that the GDN critical exponent under Hadamard powering does not exist.

%%%%%%%%%%%%%%%%%%%%%%%%%%%%%%%%%%%%%%%%%%%%%%%%%%          BACKGROUND         %%%%%%%%%%%%%%%%%%%%%%%%%%%%%%%%%%%%%%%%%%%%%%%%%%%%%%%%%%%%%%%%%%%%%

\section{Background}

Any diagonalizable matrix $A \in M_n(\R)$ can be decomposed as 
\[ A = SDS^{-1}\]
in which $D$ is a diagonal matrix consisting of the eigenvalues of $A$.
If $x_i$ denotes the $i\textsuperscript{th}$-column of $S$ and $y_i$ denotes the $i\textsuperscript{th}$-row of $S^{-1}$, then $A$ can be written as 
\[ A = \lambda_1x_1y_1 + ... + \lambda_nx_ny_n. \]

If all eigenvalues of $A$ are nonnegative, then for $\alpha>0 $, $A^\alpha$ is defined by
\[ A^\alpha = \lambda_1^\alpha x_1y_1+ ... + \lambda_n^\alpha x_ny_n. \]
Each entry of $A^\alpha$ has the form
\[ (A^\alpha)_{ij} = \lambda_1^\alpha (x_1y_1)_{ij}+...+ \lambda_n^\alpha (x_ny_n)_{ij}. \]
Any function of the form
\[ \phi(t) = a_1 e^{b_1 t} + ... +a_n e^{b_n t} \]
where $a_i, b_i \in \R$, is called an \emph{exponential polynomial}. In particular, if all eigenvalues of $A$ are nonnegative, then each entry of $A^\alpha$ is an
exponential polynomial in $\alpha$. The eigenvalues of the $A^\alpha$ are obvious but the non-negativity of the entries is not obvious. The following version of Descartes' rule for exponential polynomials is
well known and appears as an exercise in \cite{DR}.

\begin{lemma}\label{DR}
Let $ \phi(t) = \sum_{i=1}^{n} a_i e^{b_i t}$ be a real exponential polynomial such that each $a_i \not = 0$ and
$b_1  > b_2 > . . . > b_n$. The number of real roots of $\phi(t)$, counting multiplicity, cannot exceed the number
of sign changes in the sequence of coefficients $(a_1, a_2, . . . , a_n)$.
\end{lemma}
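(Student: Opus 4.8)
The natural plan is induction on the number of terms $n$, with Rolle's theorem as the driving tool. I would begin by recording the standing fact that a nonzero exponential polynomial is real-analytic, hence has only isolated real zeros; in particular all the root counts below are finite and the usual multiplicity bookkeeping for Rolle's theorem applies. The base case $n=1$ is trivial: $\phi(t)=a_1e^{b_1t}$ has no real root and the length-one coefficient sequence has no sign change, so $0\le 0$.

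For the inductive step I would first normalize. Put $\psi(t):=e^{-b_nt}\phi(t)=a_n+\sum_{i=1}^{n-1}a_ie^{(b_i-b_n)t}$; multiplying by the everywhere-positive analytic factor $e^{-b_nt}$ does not change the real roots of $\phi$ or their multiplicities. Differentiating annihilates the constant term, so $\psi'(t)=\sum_{i=1}^{n-1}a_i(b_i-b_n)e^{(b_i-b_n)t}$ is an exponential polynomial with only $n-1$ terms and distinct exponents $b_1-b_n>\cdots>b_{n-1}-b_n$. Since every multiplier $b_i-b_n$ is positive, the coefficient sequence of $\psi'$ has the same sign pattern as $(a_1,\dots,a_{n-1})$, and hence the same number of sign changes, say $V'$. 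By the inductive hypothesis $\psi'$ has at most $V'$ real roots counting multiplicity, and Rolle's theorem---counting a multiplicity-$m$ root of $\psi$ as a multiplicity-$(m-1)$ root of $\psi'$, plus one root of $\psi'$ strictly between each pair of consecutive distinct roots of $\psi$---then bounds the number of real roots of $\psi$, hence of $\phi$, by $V'+1$.

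It remains to compare $V'$ with the number $V$ of sign changes of the full sequence $(a_1,\dots,a_n)$. When $a_{n-1}$ and $a_n$ have opposite signs we have $V=V'+1$ and are done. The one genuinely delicate case---which I expect to be the main obstacle---is when $a_{n-1}$ and $a_n$ have the same sign, so $V=V'$ and the bound $V'+1$ overshoots by one; here I must locate an extra root of $\psi'$ not accounted for above. The idea is to exploit the behavior as $t\to-\infty$: since every non-constant term of $\psi$ has a positive exponent and so decays, $\psi(t)\to a_n$ and $\psi'(t)\to 0$, with the sign of $\psi'(t)$ eventually that of the slowest-decaying term $a_{n-1}(b_{n-1}-b_n)$, i.e.\ that of $a_n$. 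If $\psi$ has no real root the claim is trivial; otherwise let $t_0$ be its smallest root. On $(-\infty,t_0)$ the function $\psi$ is nonvanishing with limiting sign that of $a_n$, while $\psi(t_0)=0$, so the mean value theorem produces a point of $(-\infty,t_0)$ at which $\psi'$ has the opposite sign; combined with $\psi'$ having the sign of $a_n$ far to the left, $\psi'$ must vanish somewhere in $(-\infty,t_0)$. This zero lies strictly to the left of every zero of $\psi$, so it is genuinely new, and the bound improves to $(\text{roots of }\psi)\le(\text{roots of }\psi')\le V'=V$, completing the induction.
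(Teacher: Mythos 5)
The paper does not actually prove Lemma~\ref{DR}: it is quoted as well known and attributed to an exercise in \cite{DR}, so there is no in-paper argument to compare yours against. Your proof is correct, and it is the standard self-contained argument for this statement: normalize by the positive factor $e^{-b_n t}$, differentiate to remove the constant term, apply the inductive hypothesis to the $(n-1)$-term derivative $\psi'$ together with Rolle's theorem (with the usual multiplicity bookkeeping), and absorb the extra ``$+1$'' in the case where $a_{n-1}$ and $a_n$ share a sign by exhibiting a zero of $\psi'$ strictly to the left of the smallest zero of $\psi$, using that $\psi\to a_n$ and that $\psi'$ eventually has the sign of $a_{n-1}$ as $t\to-\infty$; since all Rolle-produced zeros of $\psi'$ lie at or between zeros of $\psi$, that extra zero is indeed new, and the case analysis covers $V=V'+1$ and $V=V'$ correctly. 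One small point worth tightening: isolatedness of zeros alone does not yield finiteness of the root count or the existence of a smallest root. Either note that the zero set of $\psi$ is closed and bounded (bounded below because $\psi\to a_n\neq 0$ at $-\infty$, bounded above because the $a_1$ term dominates at $+\infty$), or run the induction contrapositively (``at least $V+1$ roots of $\psi$, counted with multiplicity, would force at least $V+1$ roots of $\psi'$ in the equal-sign case, contradicting the inductive hypothesis''), which avoids assuming finiteness at all. With that adjustment the argument is complete.
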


%%%%%%%%%%%%%%%%%%%%%%%%%%%%%%%%%%%%%%%%%%%%%%%%          EXISTENCE & UPPER BOUNDS        %%%%%%%%%%%%%%%%%%%%%%%%%%%%%%%%%%%%%%%%%%%%%%%%%%%%%%%%%%

\section{The existence and an upper bound for the GDN critical exponent}

We follow the strategy of \cite{Johnson} to show the existence and an upper bound for CE.
Lemma \ref{DR} leads immediately to the the existence of a GDN CE.

\begin{theorem}\label{existence}
There is a function $m(n)$ such that for any $n$-by-$n$ GDN matrix $A$, $A^\alpha$ is generalized doubly
nonnegative for $\alpha \ge m(n)$.
\end{theorem}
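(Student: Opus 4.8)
The plan is to reduce the statement to a uniform bound on the largest zero of the entrywise functions $\alpha\mapsto(A^\alpha)_{ij}$. Since $A=SDS^{-1}$ is GDN, the matrix $A^\alpha=SD^\alpha S^{-1}$ is automatically diagonalizable and has the nonnegative eigenvalues $\lambda_1^\alpha,\dots,\lambda_n^\alpha$; thus the only obstruction to $A^\alpha$ being GDN is entrywise nonnegativity. Scaling $A$ by a positive constant scales $A^\alpha$ by a positive constant and changes none of these properties, so we may normalize $\rho(A)=1$. Discarding the zero eigenvalues (whose contribution to $A^\alpha$ vanishes for $\alpha>0$) and any vanishing coefficients, each entry is an exponential polynomial
\[
\phi_{ij}(\alpha)=\sum_{k\,:\,\lambda_k>0,\ (x_ky_k)_{ij}\neq 0}(x_ky_k)_{ij}\,e^{(\ln\lambda_k)\alpha}
\]
with at most $n$ terms. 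By Lemma~\ref{DR}, $\phi_{ij}$ has at most $n-1$ real zeros, counted with multiplicity. Moreover $\phi_{ij}$ is nonnegative at every positive integer $p$, since $\phi_{ij}(p)=(A^p)_{ij}\ge 0$ by entrywise nonnegativity of $A$ and of its integer powers.

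These two facts already force $\phi_{ij}$ to be eventually nonnegative: beyond its largest zero it has constant sign, and that sign must be $\ge 0$ because $\phi_{ij}$ is nonnegative at arbitrarily large integers. Hence, for each fixed $A$, there is a threshold past which every entry is nonnegative. To obtain a threshold depending only on $n$, let $r(A,i,j)$ denote the largest zero of $\phi_{ij}$ (and $0$ if there is none); the task is to show
\[
m(n):=\sup_{A\ \text{GDN},\ 1\le i,j\le n} r(A,i,j)<\infty ,
\]
after which $A^\alpha$ is GDN for all $\alpha\ge m(n)$, as desired.

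For the finiteness, first localize the negative excursions: the set $\{\alpha>0:\phi_{ij}(\alpha)<0\}$ is a union of open intervals, each with its endpoints among the zeros of $\phi_{ij}$ and containing no positive integer, hence each of length at most $1$ and contained in a single interval $(\ell,\ell+1)$ with $\ell\in\Z_{\ge 0}$; since each such interval consumes at least two of the at most $n-1$ zeros, there are at most $\lfloor (n-1)/2\rfloor$ of them. The crux is to prevent these excursions from marching off to infinity. Here I would peel off the dominant behavior: group the terms with $\lambda_k=\rho(A)=1$ and write $\phi_{ij}(\alpha)=C+\eta(\alpha)$, where $C$ is the sum of the corresponding coefficients and $\eta(\alpha)\to 0$ is an exponential polynomial with strictly fewer terms and all exponents negative. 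If $C<0$ then $\phi_{ij}$ is eventually negative, contradicting nonnegativity at the integers; if $C=0$ then $\phi_{ij}=\eta$ has strictly fewer terms and one recurses (the base cases of zero or one term being immediate). In the remaining case $C>0$, at the largest zero $r$ one has $\eta(r)=-C$, and combining the bound $|\eta(\alpha)|\le\bigl(\sum|d_k|\bigr)\nu^\alpha$ (with $\nu<1$ the largest surviving ratio of eigenvalues) with the fact that the last excursion sits in the unit interval just to the left of $r$, where $\phi_{ij}$ is nonnegative at both integer endpoints, bounds $r$ in terms of $n$ and the number of remaining terms. Tracking the recursion and the at most $\lfloor(n-1)/2\rfloor$ unit intervals produces a bound that grows quadratically in $n$, which is exactly the ``low-coefficient quadratic upper bound'' referred to in the introduction.

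The main obstacle is this last step: a crude tail estimate alone fails, since the relevant eigenvalue ratios may approach $1$ (so the dominant term takes arbitrarily long to swamp the rest), and it is precisely the combination of the Descartes-type bound on the number of zeros with nonnegativity on the entire integer lattice that keeps the excursions confined to a window of size quadratic in $n$. Once that window is established, taking $m(n)$ equal to its right endpoint completes the proof.
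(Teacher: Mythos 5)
Your setup is right (only entrywise nonnegativity of $A^\alpha$ is in question, each entry is an exponential polynomial with at most $n$ terms, hence at most $n-1$ zeros by Lemma~\ref{DR}, nonnegative at every positive integer, and each negative excursion sits in an integer-free open interval that consumes at least two zeros), but the proof has a genuine gap exactly where you flag it: you never establish that the excursions cannot occur arbitrarily far out, i.e.\ that your supremum defining $m(n)$ is finite. Worse, the route you sketch cannot be completed from the facts you allow yourself. An exponential polynomial with as few as three terms can be nonnegative at every positive integer and still dip negative in a window of width less than $1$ arbitrarily far to the right: translating $t\mapsto t+s$ in $1-ce^{-at}+c'e^{-bt}$ is the same as rescaling $c\mapsto ce^{-as}$, $c'\mapsto c'e^{-bs}$, so a single narrow dip can be pushed out to any location while avoiding the integers. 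Thus ``at most $n$ terms, few zeros, nonnegative on the integer lattice'' does not bound the largest zero entrywise, and your $C+\eta$ peeling with the tail estimate $|\eta(\alpha)|\le(\sum|d_k|)\nu^\alpha$ fails for the reason you yourself note ($\nu$ near $1$, coefficients unbounded). Finiteness must come from matrix structure, not from each entry separately.

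The ingredient the paper uses, and which is absent from your proposal, is propagation by multiplication: since $A\ge 0$ entrywise, $A^{\alpha+1}=A\,A^{\alpha}$, so if the \emph{whole matrix} $A^\alpha$ is entrywise nonnegative for all $\alpha\in[m,m+1]$ with $m$ an integer, then it is entrywise nonnegative for all $\alpha\ge m$. Consequently every ``bad'' unit interval $[m,m+1]$ (one containing a point where some entry is negative) must precede the first clean unit interval, and each bad interval forces at least two roots of some entry's exponential polynomial there (by continuity, since $A^m,A^{m+1}\ge 0$). Summing the Descartes bound over the $n^2$ entries caps the total number of roots, hence the number of bad unit intervals, by a quantity depending only on $n$, and the first clean interval --- and with it $m(n)$ --- must occur within that many steps. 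If you insert this multiplication step in place of your per-entry supremum argument, the rest of your counting goes through and you recover the paper's proof; the sharper quadratic bound is then a separate refinement (the paper's Lemma~\ref{2.3}, Observation~\ref{ob} and Theorem~\ref{upper}), not something this existence proof needs.
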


The proof of Theorem 2.1 in \cite{Johnson} does not rely on the symmetry assumption so that essentially the same proof establishes Theorem \ref{existence}. 

\begin{proof}
Let $A$ be an $n$-by-$n$ GDN matrix. Since $A$ is entrywise nonnegative, so is $A^k$ for all positive
integers $k$. If $A^\alpha$ is entrywise nonnegative for all $\alpha \in [m,m + 1]$, where $m \in \Z$, then it follows from repeated
multiplication by $A$ that $A^\alpha$ is also entrywise nonnegative for all $\alpha \ge m$. Suppose that $A^\alpha$ has a negative entry for some
$\alpha \in [m,m + 1]$, then the exponential polynomial corresponding to that entry must have at least two
roots in the interval $[m,m+1]$ by continuity and the fact that $A^m$ and $A^{m+1}$ are both entrywise nonnegative. By Lemma \ref{DR}, the maximum number of roots each entry may possess
depends on $n$. It follows that there is a constant $m(n)$ such that $A^\alpha $ is entrywise nonnegative, and thus GDN, for all $\alpha > m(n)$. 
\end{proof}

Moreover, we may strengthen the argument in the proof of Theorem \ref{existence} to give an upper bound for the CE after developing some tools.

Let $A$ be any $n$-by-$n$ GDN matrix. Following \cite{Johnson}, corresponding to the matrix $A$, we define a matrix
$W = [w_{ij}]$ where $w_{ij}$ equals the number of sign changes in the sequence of coefficients of the exponential polynomial $(A^\alpha)_{ij}$ arranged in decreasing order of the corresponding eigenvalues. We refer
to $W$ as the \emph{sign change matrix} for $A$. By Lemma \ref{DR}, each entry $w_{ij}$ of a
sign change matrix gives an upper bound on the number of real zeros of the corresponding exponential
polynomial $(A^\alpha)_{ij}$, counting multiplicity. Note that $w_{ij} \le n-1$ because there are at most $n$ terms in the exponential polynomial $(A^\alpha)_{ij}$.

\begin{lemma}\label{2.3}

Let $A$ be an invertible GDN matrix with sign change matrix $W = [w_{ij}]$. Let $\bar{T}_{ij}= \{ \alpha > 1 : A^\alpha_{ij} < 0 \}$. Then the maximum number of connected components of $\bar{T}_{ij}$ is 
\[\begin{cases}
    \left \lfloor{\frac{w_{ij}-1}{2}}\right \rfloor   &\text{    if $w_{ij} > 0$ and $i \not = j$} \\
    	   \left \lfloor{\frac{w_{ij}}{2}}\right \rfloor &  \text{    if $w_{ij} > 0$ and $i  = j$} \\	
           0 & \text{          if $w_{ij} = 0$}
\end{cases}.\]
 \end{lemma}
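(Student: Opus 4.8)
The plan is to turn the statement into a zero‑counting problem for the single exponential polynomial $\phi = \phi_{ij}$ given by $\phi(\alpha) = (A^{\alpha})_{ij}$, and then play the global bound of Lemma \ref{DR} off against the structure that the GDN hypotheses force on $\phi$. Since $A$ is invertible, all its eigenvalues are positive, so $\phi(\alpha) = \sum_{k}\lambda_{k}^{\alpha}(x_{k}y_{k})_{ij} = \sum_{k} e^{\alpha\log\lambda_{k}}(x_{k}y_{k})_{ij}$ is an honest real exponential polynomial on all of $\R$; after merging the terms with equal eigenvalues and discarding vanishing coefficients (which does not increase the number of sign changes), Lemma \ref{DR} applies and shows that $\phi$ has at most $w_{ij}$ real zeros, counted with multiplicity. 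The three constraints I would record are: (i) $\phi(m) = (A^{m})_{ij}\ge 0$ for every positive integer $m$, since a positive integer power of an entrywise nonnegative matrix is entrywise nonnegative (as in the proof of Theorem \ref{existence}); (ii) $\phi(1) = a_{ij}\ge 0$; and (iii)—the decisive extra fact in the off‑diagonal case—$\phi(0) = (A^{0})_{ij} = (I)_{ij} = 0$ whenever $i\ne j$.

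Next I would analyze $\bar{T}_{ij} = \{\alpha>1 : \phi(\alpha)<0\}$, which is open in $(1,\infty)$ and, since $\phi$ has finitely many zeros (the case $\phi\equiv 0$ being trivial, as then $\bar{T}_{ij}=\emptyset$), has finitely many connected components; write them as $(a_{1},b_{1}),\dots,(a_{N},b_{N})$ with $1\le a_{1}<b_{1}\le a_{2}<\dots\le a_{N}<b_{N}$. Constraint (i) forces every $b_{k}$ to be finite, because an unbounded component $(a_{k},\infty)$ would make $\phi$ negative at some large integer. Hence each $b_{k}$ is a zero of $\phi$, and each $a_{k}$ is a zero of $\phi$ as well: for $k\ge 2$, and for $k=1$ when $a_{1}>1$, this is immediate from continuity together with $a_{k}\notin\bar{T}_{ij}$; and when $a_{1}=1$ it follows from (ii) and $\phi\le 0$ just to the right of $1$, which forces $\phi(1)=0$. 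Listing the (not necessarily distinct) points $a_{1},b_{1},\dots,a_{N},b_{N}\in[1,\infty)$, I claim $\phi$ has at least $2N$ zeros in $[1,\infty)$ counted with multiplicity: a point occurring once in the list is a zero of multiplicity $\ge 1$, whereas a point occurring twice must be of the form $b_{k}=a_{k+1}$, where $\phi$ vanishes but stays $\le 0$ on both sides, so (being real‑analytic and not identically zero near that point) it is a zero of even multiplicity $\ge 2$.

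Combining this with Lemma \ref{DR} finishes the argument. If $i=j$ we get $w_{ij}\ge 2N$, hence $N\le\lfloor w_{ij}/2\rfloor$; if $i\ne j$, constraint (iii) supplies one more zero at $\alpha=0\notin[1,\infty)$, so $w_{ij}\ge 2N+1$ and $N\le\lfloor (w_{ij}-1)/2\rfloor$. The line $w_{ij}=0$ of the statement is subsumed, since $N\ge 1$ already forces $w_{ij}\ge 2$, whence $w_{ij}\le 1$ gives $N=0$ (consistent with both floor formulas at $w_{ij}=1$). To justify the word ``maximum'' rather than merely ``upper bound'', one would exhibit, for each parity/residue of $n$, GDN matrices realizing the stated count in some off‑diagonal (resp. diagonal) entry; the constructions appearing later in the paper serve this purpose, but only the upper bound above is needed for the CE estimate.

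The step I expect to be the real obstacle is the zero‑count bookkeeping in the second paragraph: one must be careful about consecutive components that abut ($b_{k}=a_{k+1}$), where the shared endpoint has to be charged multiplicity two, and about the boundary case $a_{1}=1$, where the left endpoint of the first component lies on the edge of the domain and counts as a zero only because $\phi(1)=a_{ij}\ge 0$. A lesser, bookkeeping‑only point is reconciling the definition of $w_{ij}$ (which permits up to $n$ terms and repeated eigenvalues) with the hypotheses of Lemma \ref{DR} (distinct exponents, nonzero coefficients)—but merging and pruning terms can only decrease the number of sign changes, so nothing is lost.
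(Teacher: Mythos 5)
Your proposal is correct and follows essentially the same route as the paper's own proof: bound the roots of the entrywise exponential polynomial via Lemma \ref{DR}, use nonnegativity of integer powers to make each component of $\bar{T}_{ij}$ bounded with endpoints that are roots (shared endpoints counted with multiplicity at least two), and in the off-diagonal case spend one root at $\alpha=0$ since invertibility makes the exponential polynomial agree with $A^{0}=I$ there. Your version is simply more careful about the bookkeeping (merging repeated eigenvalues, the $a_{1}=1$ boundary case, and reading the lemma as an upper bound rather than an attained maximum), all of which the paper glosses over.
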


\begin{proof}

By Lemma \ref{DR} the maximum number of real roots of the exponential polynomial $A^\alpha_{ij}$ is given
by $w_{ij}$. Since $A$ is invertible, the exponential polynomials defining the entries of $A^\alpha$ when $\alpha > 0$ still
agree with $A^\alpha$ at $\alpha = 0$. Since $A^0$ is the identity matrix, the exponential polynomial $(A^\alpha)_{ij}$ has at most
$w_{ij} - 1$ roots in the interval $[1,\infty)$ when $i \not = j$.
Each of the connected components of $\bar{T}_{ij}$ is bounded because $A^k$ is nonnegative for all positive
integers $k$. The endpoints of these components are roots of the exponential polynomial $(A^\alpha)_{ij}$. If two
adjacent connected components of $\bar{T}_{ij}$ share an endpoint, that endpoint must be a root of degree at
least two. Counting multiplicity, the number of real roots of $(A^\alpha)_{ij}$ with $\alpha \ge 1$ must therefore be at
least double the number of connected components of $\bar{T}_{ij}$.
If $w_{ij}$ is zero, then the exponential polynomial $(A^\alpha)_{ij}$ has all positive coefficients, so $\bar{T}_{ij}$ is empty.
And if $w_{ii}$ is not zero, the corresponding exponential polynomial has at most $w_{ij}$ roots counting multiplicity and $0$ is not one of them. So there are at most  $\left \lfloor{\frac{w_{ij}}{2}}\right \rfloor$ number of connected components.
\end{proof}

From now on, we will denote the GDN critical exponent of $n$-by-$n$ matrices by $CE_n$. We now provide an upper bound for $CE_n$. 

\begin{theorem}\label{upper}
We have
\[ CE_n \le \begin{cases} 
      \frac{n^2 -3n+4}{2} & \text{n is odd} \\
      \frac{n^2 - 2n}{2} & \text{n is even} \\
   \end{cases}.
\]
\end{theorem}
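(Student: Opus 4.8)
The plan is to bound the total number of connected components of the "bad set" $\bar T = \bigcup_{i,j} \bar T_{ij}$ over all entries, using Lemma \ref{2.3}, and then argue that once $\alpha$ exceeds the supremum of this set, $A^\alpha$ is entrywise nonnegative and hence GDN. First I would reduce to the invertible case: if $A$ is singular, replace it by a suitable compression or a limiting argument (entrywise nonnegativity is a closed condition, and $A^\alpha$ for $\alpha$ beyond the bound can be obtained as a limit of powers of nearby invertible GDN matrices — alternatively, note that a singular GDN matrix has strictly fewer than $n$ nonzero eigenvalues, so every entry's exponential polynomial has at most $n-1$ terms, which only helps). So assume $A$ is invertible and fix its sign change matrix $W=[w_{ij}]$ with $w_{ij}\le n-1$.

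Next I would sum the per-entry bounds from Lemma \ref{2.3}. For an off-diagonal entry the number of connected components of $\bar T_{ij}$ is at most $\lfloor (w_{ij}-1)/2\rfloor \le \lfloor (n-2)/2\rfloor$, and for a diagonal entry it is at most $\lfloor w_{ij}/2\rfloor \le \lfloor (n-1)/2\rfloor$. Crucially, each connected component of $\bar T_{ij}$ is a bounded interval whose endpoints are roots of $(A^\alpha)_{ij}$, and on $(\,\sup \bar T_{ij},\infty)$ the entry $(A^\alpha)_{ij}$ is nonnegative; moreover each such component has length at most $1$, since $A^k\ge 0$ for every positive integer $k$ forces $\bar T_{ij}$ to miss every integer, so no component can contain an integer and hence no component can have length $>1$ (two consecutive integers would both lie outside a single interval of length $>1$ only if... — more carefully, a component is an open interval avoiding $\Z$, so it lies strictly between two consecutive integers and has length $<1$). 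Therefore $\sup \bar T_{ij} \le 1 + (\text{number of components of }\bar T_{ij})$, because starting from $\alpha=1$ we can cross at most one bad interval per unit length. Taking the maximum over all $i,j$, and using that the diagonal entries allow one more "half-root" than off-diagonal ones, gives $CE_n \le 1 + \big(\text{max \# components over all entries}\big)$.

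Assembling the arithmetic: the worst entry is a diagonal one, contributing up to $\lfloor (n-1)/2\rfloor$ components, but I expect the sharp count actually comes from combining the bad intervals across a whole \emph{column} or using the structure more carefully to land on $\tfrac{n^2-3n+4}{2}$ for odd $n$ and $\tfrac{n^2-2n}{2}$ for even $n$ — these quadratics suggest summing $w_{ij}$-type quantities over the $n$ entries in a single column (or exploiting that a zero entry of $A^k$ for some $k\le$ index of primitivity forces extra sign structure). The precise bookkeeping is this: in a fixed row $i$, track how $A^\alpha e_j$ can fail nonnegativity as $\alpha$ increases past $1$; each unit interval $[m,m+1]$ in which some entry goes negative "uses up" at least two roots of that entry's exponential polynomial, and the total root budget across the row, after subtracting the forced agreement with $A^0=I$ and $A^1=A\ge0$, is a quadratic in $n$; dividing by $2$ and adding the base point $1$ yields the stated bounds, with the parity split coming from the floor functions in Lemma \ref{2.3} (diagonal versus off-diagonal).

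The main obstacle I anticipate is making the "length at most $1$, hence $\sup \bar T \le 1 + \#\text{components}$" step interact correctly with the parity bookkeeping so that the odd and even cases come out with exactly the claimed coefficients rather than something off by an additive constant; in particular one must be careful that the $-1$ in the off-diagonal floor $\lfloor (w_{ij}-1)/2\rfloor$ (coming from $0$ not being a root of $(A^\alpha)_{ij}$ for $i\neq j$) and the \emph{absence} of that $-1$ on the diagonal are both accounted for, and that one does not double-count a root shared as an endpoint between two adjacent bad components. Handling the singular case cleanly (so that "invertible" in Lemma \ref{2.3} is not actually a loss of generality) is a secondary technical point that the limiting argument above should dispatch.
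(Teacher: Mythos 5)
Your proposal has the right raw materials (Lemma \ref{2.3}, the fact that bad components avoid the integers, the reduction of the singular case by continuity, and a suspicion that the count should be made column-wise), but two essential steps are missing, and the first version of your bound is actually false. The step ``$\sup\bar T_{ij}\le 1+\#\{\text{components of }\bar T_{ij}\}$, hence $CE_n\le 1+\max_{i,j}\#\text{components}$'' does not follow from anything you have said: knowing that a \emph{single} entry is nonnegative on a unit interval $[m,m+1]$ gives you nothing for $\alpha>m+1$, because $(A\cdot A^\alpha)_{ij}=\sum_k a_{ik}(A^\alpha)_{kj}$ involves the whole column $j$. The propagation argument (the paper's Observation \ref{ob}) works only for the union $\bar T_j=\bigcup_i\bar T_{ij}$ over a full column: if column $j$ of $A^\alpha$ is nonnegative on $[m,m+1]$, then repeated left multiplication by $A$ keeps it nonnegative for all $\alpha\ge m$, so the bad components of $\bar T_j$ must occupy consecutive unit intervals starting from $(1,2)$ and $CE_n\le 1+\#\{\text{components of }\bar T_j\text{ in }[1,\infty)\}$, where that count is the \emph{sum} of the Lemma \ref{2.3} bounds over the $n$ entries of the column, namely $(n-1)\left\lfloor\frac{n-2}{2}\right\rfloor+\left\lfloor\frac{n-1}{2}\right\rfloor$. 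Your per-entry maximum would instead give a linear bound (about $1+\lfloor\frac{n-1}{2}\rfloor$), which already contradicts the paper's own examples (e.g. $CE_5>5.99$). You do pivot toward the column-wise sum, but you leave the bookkeeping as a sketch.

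Second, even with the column-wise sum your arithmetic does not yet give the stated bound in the even case, and the missing idea is the one ingredient you never mention: applying the propagation argument at $m=0$ shows that if column $j$ is nonnegative on all of $(0,1)$ it is nonnegative for every $\alpha\ge 0$, so a column that is ever bad beyond $\alpha=1$ must already have a bad component in $(0,1)$. For odd $n$ this costs nothing: an off-diagonal entry has an even root budget $n-1$, one root of which is pinned at $\alpha=0$, so after spending $n-3$ roots on $\lfloor\frac{n-2}{2}\rfloor$ components in $[1,\infty)$ one simple root in $(0,1)$ remains available, giving $CE_n\le 1+\frac{n^2-3n+2}{2}=\frac{n^2-3n+4}{2}$. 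For even $n$ the raw column sum is $\frac{n^2-2n}{2}$, and adding $1$ overshoots the claimed bound by one; the point is that for even $n$ all roots are already consumed by the components in $[1,\infty)$ (together with the pinned root at $0$ off the diagonal, and the fact that a bad interval in $(0,1)$ for a diagonal entry needs two roots), so forcing a component in $(0,1)$ reduces the count in $[1,\infty)$ by at least one, yielding $CE_n\le 1+\frac{n^2-2n-2}{2}=\frac{n^2-2n}{2}$. Your appeal to ``the parity split coming from the floor functions'' does not supply this; the floors alone only produce the raw sums. (Your handling of the singular case by continuity, and the observation that components avoid integers, do match the paper; also note your phrase ``in a fixed row $i$, track how $A^\alpha e_j$ can fail'' mixes rows and columns --- the propagation direction must be chosen consistently with the side on which you multiply by $A$.)
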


\begin{observation}\label{ob}
For $j\in \{1,...,n\}$, let $\bar{T}_{j} = \{ \alpha > 1 : {A^\alpha}_{ij} < 0, i = 1, ..., n \}$.
Note that if $\bar{T}_{j} \cap (m,m + 1) = \emptyset$, for some integer $m$, then every entry in
column $j$ of $A^\alpha$ is nonnegative for all powers $\alpha \in [m,m + 1]$. Using repeated left multiplication by $A$,
we see that column $j$ of $A^\alpha$ must be nonnegative for all $\alpha \ge m$.
\end{observation}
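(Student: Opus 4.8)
The plan is to prove the two assertions of the observation in turn, the first by continuity and the second by a shift-and-multiply induction. First I would unwind the definition of $\bar{T}_j$. Reading the condition ``$(A^\alpha)_{ij} < 0,\ i = 1,\dots,n$'' as ``some entry of column $j$ is negative,'' we have $\bar{T}_j = \bigcup_{i=1}^{n}\{\alpha > 1 : (A^\alpha)_{ij} < 0\}$, so the hypothesis $\bar{T}_j \cap (m,m+1) = \emptyset$ (with $m$ a positive integer, so that $(m,m+1)\subseteq(1,\infty)$) says precisely that $(A^\alpha)_{ij} \ge 0$ for every $i$ and every $\alpha \in (m,m+1)$. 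Since each entry $(A^\alpha)_{ij}$ is an exponential polynomial in $\alpha$, hence continuous, nonnegativity on the open interval extends to the closed interval $[m,m+1]$ by taking limits at the endpoints; alternatively, $A^m$ and $A^{m+1}$ are ordinary integer powers of the entrywise nonnegative matrix $A$ and so are nonnegative outright. Either way, every entry of column $j$ of $A^\alpha$ is nonnegative for all $\alpha \in [m,m+1]$.

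For the second assertion I would first record the multiplicative identity $A^{\alpha+1} = A\,A^\alpha$, which holds because $A = SDS^{-1}$ gives $A\,A^\alpha = SDS^{-1}SD^\alpha S^{-1} = SD^{\alpha+1}S^{-1} = A^{\alpha+1}$, using $d_{ii}d_{ii}^{\alpha} = d_{ii}^{\alpha+1}$ for each nonnegative eigenvalue $d_{ii}$. Writing $c_j(\alpha) := A^\alpha e_j \in \R^n$ for the $j$-th column, this identity reads $c_j(\alpha+1) = A\,c_j(\alpha)$. Since $A$ is entrywise nonnegative, the product of $A$ with any entrywise nonnegative vector is again entrywise nonnegative, so $c_j(\alpha) \ge 0$ forces $c_j(\alpha+1) \ge 0$.

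With these two facts the conclusion follows by induction on $k \ge 0$. The base case $k=0$ is exactly the first assertion: $c_j(\alpha) \ge 0$ for all $\alpha \in [m,m+1]$. For the inductive step, suppose $c_j(\alpha) \ge 0$ for all $\alpha \in [m+k,m+k+1]$; then for any $\beta \in [m+k+1,m+k+2]$ we write $\beta = \alpha + 1$ with $\alpha = \beta - 1 \in [m+k,m+k+1]$, whence $c_j(\beta) = A\,c_j(\alpha) \ge 0$. Since $\bigcup_{k\ge 0}[m+k,m+k+1] = [m,\infty)$, column $j$ of $A^\alpha$ is nonnegative for all $\alpha \ge m$, as claimed.

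I do not anticipate a serious obstacle here; the argument is essentially shift-and-multiply bookkeeping. The only points demanding care are the correct reading of $\bar{T}_j$ as the union over $i$ of the single-entry sets, and the explicit verification of $A^{\alpha+1} = A\,A^\alpha$ in the continuous-power setting, which is immediate from diagonalizability but worth stating because the continuous powers are defined spectrally rather than by iterated multiplication.
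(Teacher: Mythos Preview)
Your proposal is correct and follows exactly the line the paper has in mind. In the paper this is stated as an \emph{observation} with the justification embedded in the statement itself (``Using repeated left multiplication by $A$\dots''), and your write-up simply makes explicit the continuity at the endpoints and the identity $A^{\alpha+1}=A\,A^{\alpha}$ that underlies that one-line remark.
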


\begin{proof}
Let $k(n)$ be the proposed upper bound. Assume $A$ is an invertible GDN matrix.

If $n$ is odd, then $\bar{T}_{j}$ can have at most 
\begin{align*}
(n-1)\left \lfloor{\frac{n-2}{2}}\right \rfloor + \left \lfloor{\frac{n-1}{2}}\right \rfloor = \frac{n^2 -3n+2}{2} 
\end{align*}
connected components in $[1, \infty)$. By Observation \ref{ob}, there has to be a connected component in $(0,1)$ as well. But this can be achieved by letting the exponential polynomial for one of the off-diagonal entries to have exactly one simple root in $(0,1)$.

If $n$ is even, then $\bar{T}_{j}$ can have at most 
\begin{align*}
(n-1)\left \lfloor{\frac{n-2}{2}}\right \rfloor + \left \lfloor{\frac{n-1}{2}}\right \rfloor = \frac{n^2 -2n}{2} 
\end{align*}
connected components in $[1, \infty)$. Again by Observation \ref{ob}, there has to be a connected component in $(0,1)$ as well. But in this case, the number of connected components in $[1, \infty)$ has to decrease by at least 1 if we insist that there is a connected component in $(0,1)$. Therefore, there are at most $\frac{n^2 -2n-2}{2}$ connected components in $[1, \infty)$. 
Finally, by Observation \ref{ob}, the connected components in $[1, \infty)$ have to lie in intervals with consecutive integers as end points, starting from $[1,2]$. Therefore, there are no such connected components in $(k(n), \infty)$

Now suppose that $A$ is singular. By continuity, $A^\alpha$ cannot have a negative entry for any $\alpha > k(n)$.
Therefore the critical exponent $CE_n \le k(n)$.
\end{proof}

%%%%%%%%%%%%%%%%%%%%%%%%%%%%%%%%%%%%%%%%%         INDEX OF PRIMITIVITY            %%%%%%%%%%%%%%%%%%%%%%%%%%%%%%%%%%%%%%%%%%%
\section{The GDN critical exponent and the index of primitivity}
We first note that since a DN matrix is also GDN, the critical exponent for GDN matrices is no smaller than the critical exponent for DN matrices.

We now focus on irreducible matrices and explore the relation between the GDN critical exponent and the index of primitivity. We will address reducible GDN matrices in the next section.

 If an $n$-by-$n$ GDN matrix is irreducible, then it is primitive by the Perron-Frobenius theorem. The \emph{index of primitivity} of a primitive matrix $A$ is the least positive integer $k$ such that $A^k$ is entrywise positive. It is known that the index of primitivity of a $n$-by-$n$ matrix is at most $(n-1)n^n$ (NOTE:CITE HORN,JOHNSON MATRIX ANALYSIS HERE).  We denote the maximum index of primitivity for primitive $n$-by-$n$ GDN matrices by $MIP_n$. 
By the definition of index of primitivity, there exists a GDN matrix that has at least one zero entry, say the $(i,j)$-entry, when raised to the power $MIP_n - 1$. If the exponential polynomial 
\[ p(t) = a_1 \lambda_1^t  + ... +  a_n \lambda_n^t \]
corresponding to the $ij$-th entry has non-vanishing derivative at $t = MIP_n - 1$, that is,
$p'(MIP_n - 1) \not = 0$
then, either 
$p(k)< 0$ for some $k>MIP_n - 1$, or 
$p(k)<0$ for all $ k \in (MIP_n - 1 -\epsilon, MIP_n - 1)$ with some $\epsilon$ small enough. In either case, the GDN critical exponent is at least $MIP_n-1$.
The only case in which the GDN critical exponent is less than $MIP_n-1$ is when the exponential polynomial corresponding to a certain entry has a multiple root at an integer larger than the critical exponent. Since the index of primitivity depends only on the number and positions of zeros in the matrix but not on the numerical values of nonzero entries, if it so happens that  $MIP_n-1 < CE_n$ with a certain matrix $A$, then the exponential polynomial corresponding to the entry where $A^{MIP_n-1}$ is zero for all GDN matrices with the same zero-nonzero pattern as $A$ has a multiple root at $MIP_n - 1$, which appears highly unlikely. Therefore, we make the following conjecture.

\begin{conj}\label{MIP<CE}
$MIP_n - 1 < CE_n$.
\end{conj}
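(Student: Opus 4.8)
The plan is to prove the strict inequality by exhibiting, for each $n$, a GDN matrix $B$ and a real number $\beta>MIP_n-1$ with $B^{\beta}$ not entrywise nonnegative; since $CE_n$ is the least $m$ for which every continuous power above $m$ of every GDN matrix is again GDN, a single such pair $(B,\beta)$ forces $CE_n\ge\beta>MIP_n-1$. The natural source of $B$ is a matrix whose zero--nonzero pattern $\mathcal P$ is primitive with index of primitivity exactly $MIP_n$. For such $B$ there is an entry $(i,j)$ with $(B^{MIP_n-1})_{ij}=0$ -- this is a property of $\mathcal P$ alone, because any entry of a power of a nonnegative matrix is a sum of products of positive edge weights over directed walks of the corresponding length -- while $(B^{k})_{ij}>0$ for every integer $k\ge MIP_n$. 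Thus the exponential polynomial $p(t):=(B^{t})_{ij}=\sum_{k}a_k\lambda_k^{t}$ satisfies $p(MIP_n-1)=0$, $p(MIP_n)>0$, and $p(k)\ge 0$ for all positive integers $k$; in particular $p\not\equiv 0$, so Lemma \ref{DR} applies to it. A preliminary step is to verify that $MIP_n$ is genuinely attained by a GDN matrix, i.e. that some extremal primitive pattern admits a diagonalizable nonnegative realization with nonnegative spectrum; note that a primitive GDN matrix has positive trace, so the extremal pattern must have a nonzero diagonal entry, and one expects the remaining positive parameters to leave enough room to force a real positive spectrum.

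The decisive step is to choose the free positive parameters of $\mathcal P$ so that $p'(MIP_n-1)=\sum_k a_k(\ln\lambda_k)\lambda_k^{MIP_n-1}<0$: then $p<0$ on an interval just to the right of $MIP_n-1$, so $B^{\beta}$ has a negative $(i,j)$-entry for $\beta$ slightly larger than $MIP_n-1$, and the argument is finished. To force this sign I would use two facts. First, primitivity makes the Perron coefficient $a_1=(x_1)_i(y_1)_j$ strictly positive, where $x_1$ and $y_1$ are the right and left Perron vectors. Second, the scaling $B\mapsto cB$ multiplies every eigenvalue by $c>0$ while leaving all $a_k$ unchanged, so we may normalize the spectral radius $\lambda_1$ to lie in $(0,1)$ and thus make $\ln\lambda_1<0$. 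If in addition the realization can be chosen with every non-Perron eigenvalue very small, then the Perron term $a_1(\ln\lambda_1)\lambda_1^{MIP_n-1}<0$ dominates $p'(MIP_n-1)$, because each remaining term $a_k(\ln\lambda_k)\lambda_k^{MIP_n-1}\to 0$ as $\lambda_k\to 0^{+}$. So the decisive step would reduce to the realizability question: does the extremal primitive pattern $\mathcal P$ admit a diagonalizable nonnegative realization whose spectrum is $\{\lambda_1,\varepsilon_2,\dots,\varepsilon_n\}$ with $\lambda_1\in(0,1)$ fixed and $\varepsilon_2,\dots,\varepsilon_n$ arbitrarily small and nonnegative?

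The main obstacle is precisely this realizability question, together with the companion task of pinning down $MIP_n$ and its extremal GDN patterns to begin with. The extremal patterns for ordinary Boolean primitivity are cycle-plus-chord patterns whose companion-type structure lets one prescribe the characteristic polynomial freely, but those matrices typically have complex eigenvalues; the GDN-extremal patterns will differ and must be analyzed by a Wielandt-style walk-counting argument adapted to the constraint of a real nonnegative spectrum. Should a realization with a large spectral gap fail to exist for some $n$, one must instead rule out the degenerate scenario flagged in the discussion above: that, for \emph{every} GDN matrix with pattern $\mathcal P$, the point $t=MIP_n-1$ is a root of $(B^{t})_{ij}$ of even multiplicity at which $p$ attains a local minimum of value $0$. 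Here it is essential to control the \emph{sign} of the first nonvanishing derivative of $p$ at $MIP_n-1$, not merely its nonvanishing, since a simple root with $p'(MIP_n-1)>0$ is by itself consistent with $CE_n=MIP_n-1$. I expect this sign control, uniformly in $n$, to be the crux of the argument.
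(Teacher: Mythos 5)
This statement is a \emph{conjecture} in the paper: the authors do not prove it, they only motivate it with the observation that if the exponential polynomial $p(t)=(A^{t})_{ij}$ vanishing at $t=MIP_n-1$ has $p'(MIP_n-1)\neq 0$, then a negative entry appears on one side of $MIP_n-1$, and that the failure mode requires a multiple root at an integer for \emph{every} GDN matrix with the extremal pattern, which they deem unlikely. Your proposal is essentially that same heuristic expanded into a program, and you correctly identify the gaps the paper leaves open: it depends on knowing $MIP_n$ and its extremal patterns (the paper's own open Question~\ref{MIP_pattern}), on realizing such a pattern as a GDN matrix with a prescribed spectrum, and on controlling the \emph{sign} of $p'(MIP_n-1)$, which you yourself flag as the unresolved crux. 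So as it stands this is not a proof of the conjecture.

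Beyond the acknowledged gaps, the one concrete mechanism you propose for forcing $p'(MIP_n-1)<0$ does not work. First, the normalization $B\mapsto cB$ is sign-neutral: the $(i,j)$ entry of $(cB)^{t}$ is $c^{t}(B^{t})_{ij}$, so at a zero of the entry the derivative is multiplied by the positive factor $c^{t}$, and shrinking the spectral radius into $(0,1)$ cannot change the sign you are trying to control. Second, the large-spectral-gap idea is incompatible with the constraint $p(MIP_n-1)=0$: since the Perron term $a_1\lambda_1^{MIP_n-1}$ is strictly positive, the non-Perron terms must sum to exactly $-a_1\lambda_1^{MIP_n-1}$ at $t=MIP_n-1$, so they can never be negligible there no matter how small the eigenvalues $\varepsilon_k$ are (their coefficients must blow up to compensate). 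In the cleanest case of a single non-Perron eigenvalue $\varepsilon$ with $a_1\lambda_1^{m}+a_2\varepsilon^{m}=0$, $m=MIP_n-1$, one gets
\[
p'(m)=a_1(\ln\lambda_1)\lambda_1^{m}+a_2(\ln\varepsilon)\varepsilon^{m}
      =a_1\lambda_1^{m}\ln\!\left(\frac{\lambda_1}{\varepsilon}\right)>0,
\]
which is precisely the sign that is consistent with $CE_n=MIP_n-1$ (the negative dip lies to the \emph{left} of $MIP_n-1$ and yields only $CE_n\ge MIP_n-1$, not the strict inequality). More generally, if $MIP_n-1$ is the largest real root of $p$, then $p'\ge 0$ there because $p$ is eventually positive, so any argument for the strict inequality must produce negativity strictly beyond $MIP_n-1$ by some other means. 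In short: the conjecture remains open in the paper, and your decisive step, as formulated, fails.
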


If Conjecture \ref{MIP<CE} is true, then $MIP_n - 1$ gives a lower bound for the critical exponent, and the following question arises naturally.

\begin{question}\label{MIP_pattern}
What is $MIP_n$ and what are the zero-nonzero patterns that attain $MIP_n$?
\end{question}

Question \ref{MIP_pattern} not only may help improve the lower bound for GDN critical exponent but is also interesting in its own right. Note that as shown in Theorem 3.2 of \cite{Johnson}, the $MIP_n \ge n -1$ because of the tridiagonal DN matrices. By Lemma 2.4 in \cite{Johnson}, the maximum index of primitivity for DN matrices is precisely $n-1$. The next two lemmas gives an upper bound for $MIP_n$ and shows that $MIP_n > n-1$  if $n$ is odd.

\begin{lemma}\label{2n-d-1}
$MIP_n \le 2n - 3$.
\end{lemma}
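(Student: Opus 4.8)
The plan is to combine a classical bound on the exponent of a primitive matrix with one structural fact about GDN matrices. Two ingredients are needed. First, as already noted in the discussion above, the index of primitivity $\gamma(A)$ of a primitive matrix depends only on its zero--nonzero pattern (equivalently, on its digraph). Second, a classical result on exponents: a primitive $n$-by-$n$ nonnegative matrix having $d\ge 1$ positive diagonal entries satisfies $\gamma(A)\le 2n-d-1$; this can be proved by the standard argument of walking from any vertex to a loop vertex in at most $n-1$ steps, inflating the walk length using the $d$ loops, and then walking out to any vertex in at most $n-1$ steps. In view of these, it suffices to prove the following.

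\textbf{Claim.} Every primitive GDN matrix $A$ has at least two positive diagonal entries, i.e.\ $d\ge 2$.

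I would prove the Claim by contradiction. Since $A$ is primitive, its Perron eigenvalue $\lambda_1$ is positive, and $\mathrm{tr}(A)=\sum_i\lambda_i\ge\lambda_1>0$, so $A$ has at least one positive diagonal entry; suppose it has exactly one, say $a_{11}>0$ and $a_{ii}=0$ for all $i\ge 2$. Compare the two standard expressions for the second coefficient $e_2(A)$ of the characteristic polynomial. On one hand $e_2(A)=\sum_{i<j}\lambda_i\lambda_j\ge 0$, because all eigenvalues are nonnegative. On the other hand $e_2(A)$ is the sum of the $2$-by-$2$ principal minors, namely $\sum_{i<j}(a_{ii}a_{jj}-a_{ij}a_{ji})=-\sum_{i<j}a_{ij}a_{ji}\le 0$ under our assumption. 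Hence $e_2(A)=0$, and since $\lambda_1>0$ while all $\lambda_i\ge 0$, this forces $\lambda_2=\cdots=\lambda_n=0$. As $A$ is diagonalizable with spectrum $(\lambda_1,0,\dots,0)$, it has rank one, so $A=uv^{T}$ for some nonnegative vectors $u,v$. But an irreducible nonnegative matrix has no zero row and no zero column, which forces $u>0$ and $v>0$ entrywise, hence $A>0$ entrywise; this contradicts $a_{22}=0$, proving the Claim. Granting the Claim, every primitive GDN matrix has $d\ge 2$, so $\gamma(A)\le 2n-d-1\le 2n-3$, and therefore $MIP_n\le 2n-3$.

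The crux is the Claim, and within it the comparison of the two formulas for $e_2(A)$; I expect the only delicate point to be the final step, where one must use primitivity, not merely nonnegativity of the spectrum, to pass from a rank-one nonnegative matrix to an entrywise positive one. Everything else is bookkeeping, modulo quoting the $2n-d-1$ exponent bound (which may be cited from standard references on combinatorial matrix theory, or reproved as sketched above).
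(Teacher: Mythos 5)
Your proof is correct, and its core is the same as the paper's: show that a primitive GDN matrix must have at least two positive diagonal entries by comparing the second coefficient of the characteristic polynomial (the sum of $2$-by-$2$ principal minors, which is $\le 0$ when only one diagonal entry is positive) with the eigenvalue expression $\sum_{i<j}\lambda_i\lambda_j\ge 0$, and then invoke the classical bound $\gamma(A)\le 2n-d-1$ for a primitive matrix with $d\ge 1$ positive diagonal entries. The one genuine difference is how the degenerate case is treated: the paper first assumes strictly positive eigenvalues (so that Descartes' rule forces $c_2>0$ strictly) and then disposes of singular GDN matrices with a brief appeal to continuity, whereas you work directly with $e_2\ge 0$ and resolve the boundary case $e_2=0$ by noting that diagonalizability then forces rank one, and a rank-one irreducible nonnegative matrix is entrywise positive, contradicting the zero diagonal entries. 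Your version buys a uniform argument that never needs the strict-positivity hypothesis or the continuity step (which in the paper is stated without detail and is the least transparent part of that proof), at the modest cost of the extra rank-one/irreducibility observation; both routes otherwise rest on the same two ingredients.
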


\begin{proof}
Let $A$ be an $n$-by-$n$ matrix and first assume it has strict positive eigenvalues. Let $t_k = \text{Tr}(A^k)$. Then the characteristic polynomial of $A$ is given by:
\[ p(\lambda) = (-1)^n\big(\lambda^n + c_1\lambda^{n-1}+c_2\lambda^{n-2}+...+c_{n-1}\lambda + c_n\big)\]
where $c_1 = -t_1$ and $c_2 = \frac{1}{2}(t_1^2- t_2)$. 

By Descartes' rule of signs, the number of positive roots of $p(\lambda)$ is at most the number of the sign changes in the sequence $(1, c_1, ..., c_n)$. Hence, if $A$ is GDN, then $c_1<0$ and $c_2 > 0$. Therefore, $A$ has at least two positive diagonal entries. If $A$ has only one positive diagonal entry, then
\[2c_2 = \Big (\sum_{i=1}^n a_{ii} \Big ) - \sum_{i,j=1}^n a_{ij}a_{ji} = - \sum_{i\not = j}^n a_{ij}a_{ji} \le 0\]
which is impossble as $c_2>0$. For an irreducible matrix $A$ with at least one positive diagonal entry, it is a routine exercise to verify that the index of primitivity of $A$ is at most $2n - d - 1$, where $d$ is the number of positive diagonal entries (see e.g., Theorem 8.5.9 in \cite{MA}). Therefore, $MIP_n \le 2n - 3$. Finally, we relax the assumption that $A$ has positive eigenvalues as the general case follows from continuity.
\end{proof}

\begin{proposition}\label{n-1}
If $n$ is odd, then $MIP_n > n-1$ and $CE_n \ge n-1$. 
\end{proposition}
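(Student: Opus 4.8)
The plan is to exhibit, for each odd $n\ge 3$, one invertible GDN matrix $A$ that witnesses both conclusions simultaneously: its index of primitivity is larger than $n-1$, and one entry of $A^{\alpha}$ is strictly negative for every $\alpha$ in an interval whose right endpoint is $n-1$.

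First I would write down the matrix: let $A$ be the weighted $n$-cycle with loops at $1,\dots,n-1$, i.e. the matrix with positive diagonal entries $a_1,\dots,a_{n-1}$ in positions $(1,1),\dots,(n-1,n-1)$, a zero in position $(n,n)$, positive superdiagonal entries in positions $(i,i+1)$ for $i=1,\dots,n-1$, a positive corner entry in position $(n,1)$, and zeros elsewhere. A one-line cofactor expansion along the last row gives characteristic polynomial $\lambda\prod_{i=1}^{n-1}(\lambda-a_i)-c$, where $c>0$ is the product of all $n$ off-diagonal entries. I would then fix $0<a_1<\dots<a_{n-1}$ and show that for $c>0$ small enough this polynomial has $n$ distinct positive roots: since $n$ is odd, $q(\lambda):=\lambda\prod_{i=1}^{n-1}(\lambda-a_i)$ is negative on $(-\infty,0)$, has $(n-1)/2$ disjoint bounded intervals on which it is positive, and one last interval on which it increases to $+\infty$; for $c$ less than the smallest of the corresponding local maxima (and off the finitely many critical values), $q(\lambda)=c$ has two solutions in each bounded hump and one beyond $a_{n-1}$, all positive and distinct. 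Hence $A$ is GDN and invertible. This step is where oddness is essential: for even $n$ the same polynomial acquires a root in $(-\infty,0)$, and no GDN matrix with this zero pattern can be invertible.

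Next I would do the combinatorics of the digraph of $A$. Vertex $n$ carries no loop, its only out-edge is $n\to 1$, and the only directed path from $1$ back to $n$ is $1\to 2\to\dots\to n$; so every closed walk at $n$ has length at least $1+(n-1)=n$, while one of length exactly $n$ exists. Therefore $(A^k)_{nn}=0$ for $1\le k\le n-1$ and $(A^k)_{nn}>0$ for $k\ge n$, so $A$ has index of primitivity $n$ (it is no harder to check $A^n>0$ entrywise), and in particular $MIP_n\ge n>n-1$. Combined with Lemma~\ref{2n-d-1}, this also yields $MIP_n=n$ for $n=3$.

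Finally, for the bound on $CE_n$, consider the exponential polynomial $p(\alpha)=(A^{\alpha})_{nn}=\sum_{i=1}^n\mu_i^{\alpha}\beta_i$ in the $n$ eigenvalues $\mu_i>0$ of $A$. By Lemma~\ref{DR} it has at most $n-1$ real zeros, counted with multiplicity; but the paragraph above shows it vanishes at the $n-1$ integers $1,2,\dots,n-1$, so those are exactly its zeros and each is simple, whence $p$ changes sign at each of them. Since $A$ is invertible, $A^0=I$ and $p(0)=1>0$, so by alternation $p<0$ precisely on $(1,2)\cup(3,4)\cup\dots\cup(n-2,n-1)$, the last interval appearing because $n-2$ is odd. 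Thus $A^{\alpha}$ has a negative entry for every $\alpha\in(n-2,n-1)$, which forces $CE_n\ge\alpha$ for all such $\alpha$ and hence $CE_n\ge n-1$. The one genuinely non-routine point is the root count for $\lambda\prod_{i=1}^{n-1}(\lambda-a_i)-c$ in the second paragraph; everything else is a fixed walk count together with the Descartes-type Lemma~\ref{DR} already available.
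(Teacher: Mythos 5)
Your proposal is correct and uses essentially the same construction and argument as the paper: the same zero--nonzero pattern (loops at vertices $1,\dots,n-1$, a superdiagonal path, and the corner entry $(n,1)$), the same walk-counting observation that $(A^k)_{nn}=0$ for $k=1,\dots,n-1$, and the same use of Lemma~\ref{DR} to conclude that the exponential polynomial $(A^\alpha)_{nn}$ has exactly the simple roots $1,\dots,n-1$ and is therefore negative on $(n-2,n-1)$. The only difference is how GDN-ness is certified --- you analyze the characteristic polynomial $\lambda\prod_{i=1}^{n-1}(\lambda-a_i)-c$ for small $c$, while the paper uses Gershgorin disks together with $\det(A)=\epsilon^n>0$ --- and both hinge on $n$ being odd in the same way.
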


\begin{proof}

Consider the matrix 
\begin{equation}
A = \begin{bmatrix}
d_1  & \epsilon  & 0 & \cdots & \cdots & \cdots & \cdots & 0 \\
0  & d_2  & \epsilon  & \ddots & && & \vdots \\
0 & 0  & d_3 & \ddots  & \ddots & &  & \vdots \\
\vdots & \ddots & \ddots & \ddots & \ddots & \ddots &  & \vdots \\
\vdots & & \ddots & \ddots & \ddots & \ddots & \ddots& \vdots\\
\vdots & &  & \ddots & \ddots & d_{n-2} & \epsilon& 0\\
0  &  & & & \ddots & 0  & d_{n-1}  &  \epsilon\\
\epsilon & 0 &  \cdots & \cdots & \cdots & 0 &  0& d_n  \\
\end{bmatrix}
\end{equation}
where $d_1> d_2 > ... > d_{n-1} > d_n  = 0$ and $0< \epsilon< \max\{\frac{d_i - d_{i+1}}{2}\}$. By the Gershgorin circle theorem, all eigenvalues are real and the first $n-1$ eigenvalues are positive.
Moreover, since 
\[ \det(A) = \epsilon^n > 0 \]
all eigenvalues are positive.    
Therefore, $A$ is GDN. 

Note that $A^k_{nn}  = 0$ for $ k = 1,2, ..., n-1$. Hence the index of primitivity of $A$ is at least $n$. Since there are at most $n-1$ roots for the exponential polynomial $p(\alpha) = A^\alpha_{nn}$ and $A^{n}_{nn}>0$,  if follows that $A^\alpha_{nn}<0$ if $\alpha \in(n-2,n-1)$. Therefore, the critical exponent is at least $n-1$. 
\end{proof}

\begin{corollary}
$CE_3 = 2$.
\end{corollary}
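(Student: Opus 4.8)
This is an immediate consequence of the two bounds already established, specialized to $n=3$. The plan is simply to sandwich $CE_3$ between a matching lower and upper bound.

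First I would invoke Theorem~\ref{upper} in the odd case: for $n=3$ the upper bound reads $\frac{n^2-3n+4}{2} = \frac{9-9+4}{2} = 2$, so $CE_3 \le 2$. Next I would invoke Proposition~\ref{n-1}, which for odd $n$ gives $CE_n \ge n-1$; at $n=3$ this yields $CE_3 \ge 2$. Combining the two inequalities gives $CE_3 = 2$.

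There is essentially no obstacle here, since both ingredients are proved earlier in the paper; the only thing worth a sentence of care is confirming that the constructed GDN matrix $A$ of Proposition~\ref{n-1} at $n=3$ genuinely exhibits a negative entry for some $\alpha$ slightly below $n-1=2$ (so that the critical exponent is not merely attained but forced to be at least $2$), and that the $3$-by-$3$ specialization does not degenerate — but this is exactly what the $A^\alpha_{nn}$ argument in that proof delivers, with the single exponential polynomial $p(\alpha) = A^\alpha_{33}$ having at most two roots, vanishing at $\alpha = 1,2$, hence negative on $(1,2)$. Thus the lower bound of $2$ and the upper bound of $2$ coincide, completing the proof.
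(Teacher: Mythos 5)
Your proof is correct and follows essentially the same route as the paper: the upper bound $CE_3 \le 2$ from Theorem~\ref{upper} (odd case) combined with the lower bound $CE_3 \ge 2$ from Proposition~\ref{n-1}. Your extra sentence tracing through the $A^\alpha_{33}$ argument at $n=3$ is accurate but not needed beyond citing the proposition.
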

\begin{proof}
The upper bound for the critical exponent is 2 by Theorem \ref{upper}, and the lower bound is also 2 by Proposition \ref{n-1}. 
\end{proof}

\begin{remark}
In Lemma 2.2 of \cite{Johnson}, it was shown that the diagonal entries of DN matrices remain positive under continuous powering, while in the proof of Proposition \ref{n-1}, a negative entry appears on the diagonal under continuous powering.
\end{remark}

It should be noted that when $n>3$, better lower bounds for $CE_n$ than $n-1$ exist, as demonstrated in the following examples. Examples of GDN matrices with highest index of primitivity discovered are displayed at the end of the section. 
\begin{example}

\begin{equation*}
A_4 = \begin{bmatrix}
           1     &      7   &        0     &      0\\
           0     &   17000   &      8500     &      0\\
           0     &      0   &    24000      &   1600\\
          20     &      0   &        0     &      5\\
\end{bmatrix}
\end{equation*}
The critical exponent for $A_4$ is between 3.99 and 4.
\begin{equation*}
A_5 = \begin{bmatrix}
10     &     70    &       0    &       0     &      0  \\
           0          & 5      &    90      &     0        &   0\\
           0         &  0      &  80000      &  15000         &  0\\
           0        &   0      &     0     &  120000         &  30\\
         150       &    0      &     0    &       0        &   0\\
\end{bmatrix}
\end{equation*}
The critical exponent for $A_5$ is between 5.99 and 6.

\begin{equation*}
A_6 = \begin{bmatrix}
         156    &    1605    &       0   &        0    &       0     &      0\\
           0    &     375    &    7932   &        0    &       0     &      0\\
           0    &       0    &     805   &     7840    &       0     &      0\\
           0    &       0    &       0   &  13803330    &   224210     &      0\\
           0    &       0    &       0   &        0    &  9373900     &   18590\\
       105720    &       0    &       0   &        0    &       0     &   25200\\
\end{bmatrix}
\end{equation*}
The critical exponent for $A_6$ is between 6.99 and 7.
\end{example}

\begin{remark}
Note that in the 4-by-4 case, the upper bound for critical exponent is 4 by Theorem \ref{upper} and the matrix $A_4$ in the previous example has critical exponent greater than 3.99. We notice that as row 1 and 4 of $A_4$ decrease in proportion (or equivalently as row 2 and 3 increase in proportion), the critical exponent increases. Therefore, we make the following conjecture.
\end{remark}

\begin{conj}\label{4}
$CE_4 = 4$.
\end{conj}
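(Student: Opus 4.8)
The bound $CE_4\le 4$ is exactly the even case of Theorem~\ref{upper} at $n=4$ (since $\frac{4^2-2\cdot4}{2}=4$), so the whole content of the conjecture is the matching lower bound $CE_4\ge 4$. This lower bound cannot be realized by a single GDN matrix: $A^4$ is an ordinary integer power of an entrywise nonnegative matrix, hence entrywise nonnegative, so any negative entry of $A^\alpha$ occurs at an $\alpha$ strictly between consecutive integers (and, by the argument of Theorem~\ref{existence}, inside a bounded open interval avoiding the integers). Thus $CE_4\ge 4$ can only come from a one-parameter family of GDN matrices whose largest ``bad'' power tends to $4$ --- the behaviour anticipated in the Remark preceding the conjecture. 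The plan is to use $A_s:=\mathrm{diag}(s,1,1,s)\,A_4$ for small $s>0$ (equivalently $s^{-1}A_s=\mathrm{diag}(1,s^{-1},s^{-1},1)A_4$, which has the same critical exponent) and to show that for every $\alpha<4$ the matrix $A_s^\alpha$ has a negative entry once $s$ is small enough. First, $A_s$ is GDN for small $s$: entrywise nonnegativity is clear, and since the digraph of $A_4$ has only the loops and the single $4$-cycle $1\to2\to3\to4\to1$, the characteristic polynomial factors as $\chi_s(\lambda)=(\lambda-s)(\lambda-5s)(\lambda-17000)(\lambda-24000)-c\,s^2$ with $c=7\cdot8500\cdot1600\cdot20>0$; a Rouch\'e/continuity argument places two roots near $17000$ and $24000$, while the other two are the roots of the reduced quadratic $\lambda^2-6s\lambda+\bigl(5-c/(17000\cdot24000)\bigr)s^2\approx 0$, which are real, positive, and of order $s$. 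So for $s$ small $A_s$ is diagonalizable with four simple positive eigenvalues $\mu_1(s)\to24000$, $\mu_2(s)\to17000$, $\mu_3(s),\mu_4(s)=\Theta(s)$.

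The heart of the matter is the $(4,1)$ entry. Combinatorially, for $k=0,1,2,3,4$ every walk of length $k$ from vertex $4$ to vertex $1$ in the digraph of $A_4$ stays inside $\{1,4\}$: leaving $\{1,4\}$ requires the edge $1\to2$, after which returning to $1$ costs at least three further steps, which is impossible in four steps starting at $4$. Since each edge of $A_s$ inside $\{1,4\}$ carries an extra factor $s$, this yields $(A_s^k)_{41}=s^k(A_4^k)_{41}$ for $k\le4$, and in particular $(A_s^4)_{41}=\Theta(s^4)$. In the spectral expansion $(A_s^\alpha)_{41}=\sum_{i=1}^4\rho_i(s)\,\mu_i(s)^\alpha$ with $\rho_i(s)=(x_iy_i)_{41}$, one reads off from the eigenvector equations --- using the second-order estimates $\mu_1(s)-24000=\Theta(s^2)$ and $\mu_2(s)-17000=\Theta(s^2)$ --- that $\rho_1(s),\rho_2(s)=\Theta(s^3)$ while $\rho_3(s),\rho_4(s)=\Theta(1)$, with the Perron residue $\rho_1(s)>0$ and $\rho_2(s)<0$. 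Plugging this into $(A_s^4)_{41}=\Theta(s^4)=o(s^3)$ forces the $s^3$-coefficients to cancel, i.e. $c_1\,24000^4+c_2\,17000^4=0$ where $c_i:=\lim_{s\to0}\rho_i(s)/s^3$; equivalently $-c_2/c_1=(24000/17000)^4$.

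Now fix $\alpha\in(3,4)$. Dividing by $s^3$, the small-eigenvalue terms give $(\rho_3\mu_3^\alpha+\rho_4\mu_4^\alpha)/s^3=\Theta(s^{\alpha-3})\to0$, so $s^{-3}(A_s^\alpha)_{41}\to c_1\,24000^\alpha+c_2\,17000^\alpha=c_1\,17000^\alpha(r^\alpha-r^4)$ with $r:=24000/17000>1$; since $\alpha<4$ this limit is strictly negative. Hence $(A_s^\alpha)_{41}<0$ for all sufficiently small $s>0$, so $CE_4\ge\alpha$; letting $\alpha\uparrow4$ gives $CE_4\ge4$, and together with Theorem~\ref{upper} we conclude $CE_4=4$.

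The main obstacle is the middle step --- pinning down the orders $\rho_1,\rho_2=\Theta(s^3)$ and $\rho_3,\rho_4=\Theta(1)$ together with their signs, uniformly in $s$. Going through the moment/Vandermonde system produces a cancellation already at order $s^2$ that must be unwound; tracking the four eigenvectors directly (with the $\Theta(s^2)$ perturbation estimates for $\mu_1,\mu_2$) sidesteps that cancellation but calls for careful bookkeeping, and one must keep the $\Theta$-estimates uniform enough to justify the limit $s\to0$. The GDN verification and the final limiting step are routine by comparison.
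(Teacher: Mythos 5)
You should first note that the paper contains no proof of this statement: it is Conjecture~\ref{4}, supported only by the numerical example $A_4$ (critical exponent between $3.99$ and $4$) and the remark that the critical exponent increases as rows $1$ and $4$ are scaled down. Your family $A_s=\mathrm{diag}(s,1,1,s)A_4$ is exactly a quantitative implementation of that remark, and your overall skeleton is sound: the upper bound is indeed the even case of Theorem~\ref{upper}; the characteristic polynomial is correctly $\chi_s(\lambda)=(\lambda-s)(\lambda-5s)(\lambda-17000)(\lambda-24000)-cs^2$ with $c=7\cdot8500\cdot1600\cdot20$, so $A_s$ is GDN with four simple positive eigenvalues for small $s>0$; the walk-counting identity $(A_s^k)_{41}=s^k(A_4^k)_{41}$ for $k\le 4$ is correct; and, granting the residue asymptotics, the final limit $s^{-3}(A_s^\alpha)_{41}\to c_1\,17000^\alpha(r^\alpha-r^4)<0$ for $\alpha\in(3,4)$ does give $CE_4\ge\alpha$ and hence $CE_4=4$.

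The gap is the step you yourself flag as the ``main obstacle'': the orders and signs $\rho_1,\rho_2=\Theta(s^3)$, $\rho_3,\rho_4=\Theta(1)$ are asserted, not proven, and moreover $\Theta$-bounds alone do not suffice --- you need the limits $c_i=\lim_{s\to0}\rho_i(s)/s^3$ to exist (and $c_1>0$) both for the cancellation identity and for the final limit. The clean way to close this is not eigenvector bookkeeping but the adjugate formula for spectral projections of simple eigenvalues: $\rho_i(s)=\bigl(\mathrm{adj}(\mu_i I-A_s)\bigr)_{41}/\chi_s'(\mu_i)$, and the $(4,1)$ entry of $\mathrm{adj}(\lambda I-A_s)$ is exactly $20s(\lambda-17000)(\lambda-24000)$. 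Combined with the second-order estimates $\mu_1-24000\sim cs^2/(24000^2\cdot7000)$, $\mu_2-17000\sim-cs^2/(17000^2\cdot7000)$ and $\chi_s'(\mu_1)\to24000^2\cdot7000$, $\chi_s'(\mu_2)\to-17000^2\cdot7000$, this gives directly $c_1=20c/(7000\cdot24000^4)>0$ and $c_2=-20c/(7000\cdot17000^4)<0$, so the relation $c_1\,24000^4+c_2\,17000^4=0$ that you extract indirectly from walk counting drops out explicitly, and $\rho_3,\rho_4=\Theta(1)$ follows from $\chi_s'(\mu_{3,4})=\Theta(s)$. With that step supplied your argument is complete, proves the conjectured equality $CE_4=4$ (the lower bound being attained only in the limit of the family, consistent with the integrality discussion in Section~6), and goes strictly beyond anything proved in the paper.
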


In Lemma \ref{2n-d-1}, we have shown that there have to be at least 2 positive entries on the diagonal. If there are exactly 2 positive entries on the diagonal, then the maximum index of primitivity is $2n-3$, giving $CE_n \ge 2n-4$ if Conjecture \ref{MIP<CE} holds. However, generally the zero-nonzero pattern with exactly two positive diagonal entries do not permit GDN matrices. Hence we perturb the diagonal zero entries and aim to the achieve lower bounds for $CE_n$ that are close to $2n-4$.  And we observe that when $n=4$ and $n=5$, we can perform such perturbation and produce CE greater than 3.99 and 5.99 respectively. Therefore, we ask the following question:
\begin{question}
Is $CE_n$ = $2(n-2)$?
\end{question}

\begin{table}\label{t}
\begin{center}

\begin{tabular}{ l | c | r|c }
    \hline
    n & $MIP_n$ & $CE_n$  & Upper bound for $CE_n$ (by Theorem 3.3)\\ \hline \hline
    2 & 1 & 0 & 0\\ \hline
    3 & 3 & 2 & 2\\ \hline
    4 & 4 & $>$3.99 &4 \\ \hline
    5 & 6 & $>$5.99 &7\\ \hline
    6 & 6 & $>$6.99 &12\\ \hline
    7 & 7 & $>$8.99 &16\\ \hline
       \hline
  \end{tabular}
\caption{Largest $CE_n$ and $MIP_n$ discovered in numerical experiments for small $n$'s}\label{table:t}
\end{center}

\end{table}

Table \ref{table:t} 
shows the highest $MIP_n$ and GDN CE discovered in numerical experiments.

Notice that for all these low dimension cases with $n>2$, the lower bounds for the critical exponent are strictly larger than $n-2$, the critical exponent for DN matrices.

Now we give examples of 4-by-4 GDN matrices with index of primitivity 4, 5-by-5 GDN matrices with index of primitivity 6, and 6-by-6 GDN matrices with index of primitivity 6.

\begin{example}

\begin{equation*}
A_4 = \begin{bmatrix}
           0     &      0   &        2     &      0\\
           0     &     68   &      56      &     21\\
           0     &      0   &       0      &    16\\
          14     &     72   &       0     &      168\\
\end{bmatrix}
\end{equation*}
The index of primitivity of $A_4$ is 4 and the GDN CE of $A_4$ at least 2.99.

\begin{equation*}
A_5 = \begin{bmatrix}
1800     &     405    &       0    &       0     &      0  \\
           0          & 916      &    794      &     0        &   0\\
           447         &  0      &   0      &  7        &  0\\
           0        &   300      &     0     &    0         &  15\\
         0       &    0      &     72    &       0        &   0\\
\end{bmatrix}
\end{equation*}
The index of primitivity of $A_5$ is 6 and the GDN CE of $A_5$ is at least 4.99.

\begin{equation*}
A_6 = \begin{bmatrix}
         2439   &    1020    &       0   &        0    &       0     &      0\\
           0    &     1917   &      668  &        0    &       0     &      0\\
           509  &       0    &      890  &      213    &       0     &      0\\
           0    &       2746 &       0   &        0    &     158     &      0\\
           0    &       0    &      270  &        0    &       0     &     2\\
           0    &       0    &       0   &      206    &       0     &     0\\
\end{bmatrix}
\end{equation*}
The index of primitivity of $A_6$ is 6 and the GDN CE of $A_6$ is at least 4.99.
\end{example}

%%%%%%%%%%%%%%%%%%%%%%%%%%%%%%%%%%                      ADDITIONAL OBSERVATIONS                %%%%%%%%%%%%%%%%%%%%%%%%%%%%%%%%%%%%%%%%%%%%%%%%%%%%%%%%%
\section{Additional observations}
We make a few observations about the reducible GDN matrices and about the Hadamard powering critical exponent of GDN matrices in this section.

\begin{proposition}
Let $A$ be a reducible $n$-by-$n$ GDN matrix. If 
\[ P^TAP = \begin{bmatrix}
B & C\\
0 & D \\
\end{bmatrix}
\] for some permutation matrix $P$, then 
\[P^TA^\alpha P = \begin{bmatrix}
B' & C'\\
0 & D' \\
\end{bmatrix}
\]
for all $\alpha > 0$. The matrix $B$ is $k$-by-$k$, $C$ is $k$-by-$n-k$, $0$ is $n-k$-by-$k$, and $D$ is $n-k$-by-$n-k$ for some integer $1<k<n$.
\end{proposition}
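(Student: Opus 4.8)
The plan is to show that the block upper-triangular structure is preserved under the spectral definition of $A^\alpha$, by reducing to the case already understood for integer powers and then invoking the exponential-polynomial representation. First I would conjugate by $P$ at the outset, so that without loss of generality $A$ itself has the form $\left[\begin{smallmatrix} B & C \\ 0 & D \end{smallmatrix}\right]$; note that $P^T A^\alpha P = (P^T A P)^\alpha$ because conjugation commutes with the spectral functional calculus (if $A = SDS^{-1}$ then $P^TAP = (P^TS)D(P^TS)^{-1}$). So it suffices to prove that $A^\alpha$ is block upper triangular with the same block sizes whenever $A$ is.

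Next I would use the observation from the Background section that each entry $(A^\alpha)_{ij}$ is an exponential polynomial in $\alpha$, namely $(A^\alpha)_{ij} = \sum_{\ell=1}^n \lambda_\ell^\alpha (x_\ell y_\ell)_{ij}$, and that this same formula with $\alpha$ replaced by a nonnegative integer $k$ recovers the ordinary power $A^k$ (here using that $A$ is GDN, hence diagonalizable with nonnegative eigenvalues, and invertibility is not needed since $\lambda^0=1$ is consistent with $A^0 = I$ only in the invertible case — but for $k \ge 1$ the formula is valid regardless). For an index pair $(i,j)$ lying in the lower-left zero block, i.e. $i > k$ and $j \le k$, the block triangular structure of $A$ forces $(A^m)_{ij} = 0$ for every positive integer $m$. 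Thus the exponential polynomial $\phi_{ij}(\alpha) := (A^\alpha)_{ij}$ vanishes at all positive integers. An exponential polynomial $\sum_\ell a_\ell e^{b_\ell \alpha}$ with finitely many terms and distinct exponents that has infinitely many real zeros must be identically zero: indeed, by Lemma \ref{DR} (after collecting terms so the surviving coefficients are nonzero and the exponents strictly ordered) it can have at most $n-1$ real roots unless every coefficient vanishes. Since $\phi_{ij}$ has infinitely many zeros, all its coefficients vanish, so $\phi_{ij} \equiv 0$, i.e. $(A^\alpha)_{ij} = 0$ for all $\alpha > 0$. This is exactly the statement that the lower-left block of $A^\alpha$ is zero, with the stated block sizes $k$ and $n-k$; the remaining blocks $B'$, $C'$, $D'$ are then simply defined as the corresponding subblocks of $A^\alpha$.

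The only subtlety — and the step I expect to require the most care — is the passage from ``vanishes at all positive integers'' to ``identically zero,'' because one must first rewrite $\phi_{ij}$ in the normalized form required by Lemma \ref{DR}: group together any eigenvalues $\lambda_\ell$ that coincide, discard terms whose aggregated coefficient is zero, and order the (logarithms of the) distinct remaining eigenvalues strictly. After this normalization Lemma \ref{DR} applies and caps the number of real roots by $n-1 < \infty$, contradicting the existence of infinitely many integer roots unless the normalized polynomial is empty, which is precisely the vanishing of all coefficients. One should also remark that if $A$ is singular the argument is unaffected: the zero eigenvalues simply contribute no term to $\phi_{ij}$ for $\alpha > 0$, and $(A^m)_{ij} = 0$ still holds for all integers $m \ge 1$ by block triangularity, which is all that is used. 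Finally, I would note that $D'$ inherits block triangular structure from $D$ in the same way if one wished, but the proposition as stated only asserts the single zero block, so the argument above suffices.
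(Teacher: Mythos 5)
Your proposal is correct and follows essentially the same route as the paper: observe that the $(i,j)$-entries in the lower-left block of $A^\alpha$ are exponential polynomials vanishing at every positive integer (since integer powers of a block upper triangular matrix are block upper triangular), and conclude via Lemma \ref{DR} that such an exponential polynomial, having more than $n-1$ roots, must be identically zero. Your extra care with the reduction $P^TA^\alpha P=(P^TAP)^\alpha$, the normalization needed before applying Lemma \ref{DR}, and the singular case only makes explicit what the paper's shorter argument leaves implicit.
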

\begin{proof}
Let $p(\alpha)$ be the exponential polynomial corresponding to the $ij$-th entry, where $k+1 \le i \le n$ and $1 \le j \le k$, then $p(\alpha)$ has a root at every positive integer because the $ij$-th entry is zero for all integer powers of $A$. But $p(\alpha)$ has at most $n-1$ roots counting multiplicity if it is not identically zero, so
$ p(\alpha) \equiv 0$
Therefore, the $ij$-th entry stays zero under all continuous powers of $A$ and 
\[P^TA^\alpha P = \begin{bmatrix}
B' & C'\\
0 & D' \\
\end{bmatrix}
\]
where $B'$ is $k$-by-$k$, $C'$ is $k$-by-$n-k$, $0$ is $n-k$-by-$k$, and $D'$ is $n-k$-by-$n-k$, for all $t > 0$.
\end{proof}

Continuous powers of a GDN matrix
$A = (a_{ij})$ are also well defined under Hadamard multiplication. Namely, for $\alpha>0$ 
\[ A^{(\alpha)} = (a_{ij}^\alpha). \]
Contrary to the conventional multiplication, in the Hadamard case, entrywise nonnegativity is clear, but the nonnegativity of the eigenvalues is in question. It was shown in \cite{Hadamard} that the critical exponent for continuous Hadamard powering of doubly nonnegative matrices is also $n-2$. So it is natural to ask whether there exists a critical exponent without the symmetry condition and consider generalized doubly nonnegative matrices; however, the $n-2$ critical exponent does not generalize. In fact the critical exponent does not exist as demonstrated in the case below:

\begin{example}
If
\[A = 
\begin{bmatrix} 
  2    & 1 & 1 \\ 
  1    & 1 & 1  \\ 
  1    & 5 & 2   \\ 
\end{bmatrix},
\]
then the eigenvalues of $A^{(\alpha)}$ are $\lambda_1(\alpha) = 2^\alpha - 1$, $\lambda_2(\alpha) = 2^\alpha + \frac{1}{2} + \sqrt{5^\alpha + \frac{5}{4}}$ and $\lambda_3(\alpha) = 2^\alpha + \frac{1}{2} - \sqrt{5^\alpha + \frac{5}{4}}$.
Because $\lambda_3(\alpha) < 0$ for all $\alpha > 1$, the critical exponent does not exist.
\end{example}

%%%%%%%%%%%%%%%%%%%%%%%%%%%%%%%%%%%%%%%%%%%%        Questions            %%%%%%%%%%%%%%%%%%%%%%%%%%%%%%%%%%%%%%%%%%%%%%%%%%%%%%%%%%%%%%%%%%%%%%%%%%%
\section{Questions}

In this section, we collect some questions that naturally arise when studying the GDN critical exponent. They are not only important and helpful in finding the GDN critical exponent, but are also interesting in their own right.

\begin{question}
Are GDN critical exponents for all $n$-by-$n$ matrices integers?
\end{question}
The critical exponent for both conventional and Hadamard powering of DN matrices turn out to be the integer $n-2$. It is natural to ask whether the same holds true in the conventional powering of GDN matrices. If that is indeed the case, then we can conclude $CE_4 = 4$ by the argument from section 4.
Moreover, in the case of conventional powering of DN, the maximum critical exponent is achieved by tridiagonal matrices. If $CE_n$ is also an integer and is achieved by a certain class of matrices, then we would have $2n-4$ as an upper bound for the critical exponent. To see that, suppose $A$ is a GDN matrix with the integer critical exponent $CE_n$. Then $A^{CE_n}$ has a zero entry. Because the index of primitivity of GDN matrices is at most $2n-3$ as shown in Lemma \ref{2n-d-1}, $CE_n \le 2n-4$.

\begin{question}
For which zero-nonzero patterns of primitive matrices do there exist GDN matrices?
\end{question}

\begin{question}
What is the relation between $MIP_n$ and $CE_n$?
\end{question}
We have seen in section 4 that $MIP_n$ is closely related to $CE_n$, and the knowledge of the relation between $MIP_n$ and $CE_n$ would help us gain information on one given knowledge about the other.

\bibliographystyle{gLMA}
\bibliography{mybibfile}

\end{document}